\tikzset{
	level 1/.style = {sibling distance = 1.5cm},
	level 2/.style = {sibling distance = 0.8cm},
    level distance = 0.9 cm
}
\tikzstyle{snakeline} = [decorate, decoration={snake, amplitude=.4mm, segment length=2mm}]
\tikzset{every tree node/.style={minimum width=0.1cm,draw,circle},
         blank/.style={draw=none},
         edge from parent/.style=
         {draw,edge from parent path={(\tikzparentnode) -- (\tikzchildnode)}},
         level distance=0.8cm}
\newtheorem{theorem}{Theorem}
\newtheorem{proposition}[theorem]{Proposition}
\newtheorem{conjecture}[theorem]{Conjecture}
\newtheorem{lemma}[theorem]{Lemma}
\newtheorem{example}[theorem]{Example}
\newcommand{\Sta}{{\rm Stab\,}}
\newcommand{\maj}{{\rm maj\,}}
\newcommand{\Stab}{{\rm Stab\,}}
\newcommand{\Des}{{\rm Des\,}}
\newcommand{\des}{{\rm des\,}}
\newcommand{\exc}{{\rm exc\,}}
\newcommand{\fix}{{\rm fix\,}}
\newcommand{\mdn}{\mathcal{D}}
\newcommand{\msn}{\mathfrak{S}_n}
\newcommand{\lrf}[1]{\lfloor #1\rfloor}
\newcommand{\z}{ \mathbb{Z}}
\title{A curious symmetric decomposition of the (des, exc)-Eulerian polynomials}
\author[S.-M.~Ma]{Shi-Mei Ma}
\address{School of Mathematics and Statistics, Shandong University of Technology, Zibo 255049, Shandong, China}
\email{shimeimapapers@163.com (S.-M. Ma)}
\author[T.~Mansour]{Toufik Mansour}
\address{Department of Mathematics, University of Haifa, 3498838 Haifa, Israel}
\email{tmansour@univ.haifa.ac.il(T.~Mansour)}
\author[Y.-N. Yeh]{Yeong-Nan Yeh}
\address{College of Mathematics and Physics, Wenzhou University, Wenzhou 325035, P.R. China}
\email{mayeh@math.sinica.edu.tw (Y.-N. Yeh)}
\subjclass[2010]{Primary 05A05; Secondary 12D05}
\begin{document}

\maketitle
\begin{abstract}
One of the most central result in combinatorics says that the descent statistic and the excedance statistic are equidistribued over the symmetric group.
As a continuation of the work of Shareshian-Wachs (Adv.~Math., 225(6) (2010), 2921--2966),
we provide a curious $t$-symmetric decomposition for
the generating polynomial of the joint distribution of the descent and excedance statistics over the symmetric group.
\bigskip

\noindent{\sl Keywords}: Eulerian polynomials; Descents; Excedances; Symmetric decompositions
\end{abstract}
\date{\today}
\section{Introduction}
In 1736, while working on sums of the form $\sum_{k=1}^mk^nj^k$, Euler discovered a family of polynomials defined by
\begin{equation*}\label{Anxsum}
\sum_{k=0}^\infty k^nx^k=\frac{xA_n(x)}{(1-x)^{n+1}}
\end{equation*}
for $n\geqslant 1$.
The polynomials $A_n(x)$ are now known as {\it Eulerian polynomials}.
Over the past century, they have been
widely studied because of their natural occurrence in many different contexts, such as mathematical analysis~\cite{Liu07,Petersen15}, probability distribution~\cite{Hwang20}, numerical analysis and spline approximation~\cite{He12}, combinatorics and geometry~\cite{Athanasiadis17,Foata08,Petersen15}.

The symmetric group $\msn$ is the set of all permutations of $[n]:=\{1,2,\ldots,n\}$.
For any permutation $\pi=\pi(1)\cdots\pi(n)\in\msn$, we say that $i$ is a {\it descent} (resp.~{\it excedance}, {\it fixed point})
if $\pi(i)>\pi(i+1)$ (resp.~$\pi(i)>i$,~$\pi(i)=i$).
Let $\des(\pi)$, $\exc(\pi)$ and $\fix(\pi)$ be the numbers of descents, excedances and fixed points of $\pi$, respectively.
MacMahon~\cite[Vol.~I, p.~186]{MacMahon15} observed that
$$A_n(x)=\sum_{\pi\in\msn}x^{\des(\pi)}=\sum_{\pi\in\msn}x^{\exc(\pi)},$$
which becomes one of the most central result in combinatorics.

Let $\mdn_n=\{\pi\in\msn: \fix(\pi)=0\}$ be the set of all derangements of $[n]$.
The {\it derangement polynomials} are defined by
$$d_n(x)=\sum_{\pi\in\mdn_n}x^{\exc(\pi)}.$$
The derangement polynomials share several properties with the Eulerian polynomials, including unimodality~\cite{Brenti90}, real-rootedness~\cite{Liu07},
$\gamma$-positivity~\cite{Ma19,Zeng16} and asymptotic normality~\cite{Hwang20}.

The {\it major index} of $\pi\in\msn$ is the defined by
$$\maj(\pi)=\sum_{\pi(i)>\pi(i+1)}i.$$
For $\Omega\subseteq \z$,
let $\Sta(\Omega)$ be the set of all subsets of which do not contain two consecutive integers.
We now recall a classical result.
\begin{theorem}[{\cite[Theorem~2.15]{Athanasiadis17}},{\cite[Remark~5.5]{Shareshian10}}]\label{thm01}
For any $n\geqslant 2$, one has
$$\sum_{\pi\in\mdn_n}q^{\maj(\pi)-\exc(\pi)}p^{\des(\pi)}t^{\exc(\pi)}=\sum_{i=1}^{\lrf{n/2}}\xi_{n,i}(p,q)t^i(1+t)^{n-2i},$$
where $$\xi_{n,i}(p,q)=p\sum_{\pi}p^{\des(\pi^{-1})}q^{\maj(\pi^{-1})},$$
and the sum runs through $\pi\in\msn$ for which $\Des(\pi)\in \Stab([2,n-2])$ has $i-1$ elements.
\end{theorem}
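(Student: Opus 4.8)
The plan is to prove the identity at the level of Eulerian quasisymmetric functions and then descend to the stated $(p,q)$-refinement by a principal specialization. Before anything else it is worth isolating the logical content: writing $D_n(p,q,t)=\sum_{\pi\in\mdn_n}q^{\maj(\pi)-\exc(\pi)}p^{\des(\pi)}t^{\exc(\pi)}$, note that for a derangement $\exc(\pi)$ ranges in $\{1,\dots,n-1\}$, so $D_n$ is supported on $t^1,\dots,t^{n-1}$. The polynomials $t^i(1+t)^{n-2i}$ for $1\leqslant i\leqslant\lrf{n/2}$ are linearly independent and each is palindromic about $n/2$; a dimension count shows they form a basis of the space of $t$-palindromic (about $n/2$) polynomials supported on $[1,n-1]$. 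Hence, once I establish that $D_n$ is $t$-palindromic about $n/2$ with coefficients in $\z[p,q]$, the decomposition exists and is \emph{unique}, and the entire content becomes the identification of the coefficient of $t^i(1+t)^{n-2i}$ with $\xi_{n,i}(p,q)$. To get both the palindromicity and a handle on the coefficients, I would attach to each $\pi$ Gessel's fundamental quasisymmetric function $F_{n,\mathrm{Dex}(\pi)}$, where $\mathrm{Dex}$ is the Shareshian--Wachs descent set tuned to excedances, and form the fixed-point-free Eulerian quasisymmetric functions $Q^{\mathrm{der}}_{n,j}=\sum_{\pi\in\mdn_n,\ \exc(\pi)=j}F_{n,\mathrm{Dex}(\pi)}$.

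Second, I would invoke the Shareshian--Wachs theory to expand $\sum_j Q^{\mathrm{der}}_{n,j}\,t^j$ in a power-sum / $\gamma$-type basis, producing a symmetric-function decomposition $\sum_j Q^{\mathrm{der}}_{n,j}\,t^j=\sum_i \Xi_{n,i}\,t^i(1+t)^{n-2i}$ in which each $\Xi_{n,i}$ is an explicit nonnegative symmetric function. The palindromicity of the left-hand side as a symmetric function is exactly what forces the $t$-symmetry demanded above. The natural combinatorial model for $\Xi_{n,i}$ should be permutations whose ordinary descent set is sparse and confined to the interior, i.e. lies in $\Stab([2,n-2])$ with $i-1$ elements: this is precisely the shadow of the modified Foata--Strehl valley-hopping action, whose orbits are indexed by such sparse interior descent sets, each orbit contributing a factor $(1+t)$ for every interior non-descent position and a single $t^i$ from its representative.

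Third, I would apply the stable principal specialization $\mathrm{ps}$, which converts $\sum_{\pi\in\mdn_n,\ \exc(\pi)=j}F_{n,\mathrm{Dex}(\pi)}$ into the fixed-point-free $(\maj,\exc)$ generating polynomial (up to the normalizing factor $((q;q)_n)^{-1}$), while keeping a separate variable $p$ to mark descents. The inverse permutation appearing in $\xi_{n,i}(p,q)$ enters exactly at this step: specializing a fundamental quasisymmetric function indexed by $\Des(\sigma)$ returns $q^{\maj(\sigma)}$, and matching the descent-marked data on the specialized side against the excedance side interchanges a permutation with its inverse, so that the coefficient is recorded as $p\sum_\sigma p^{\des(\sigma^{-1})}q^{\maj(\sigma^{-1})}$ over the family with $\Des(\sigma)\in\Stab([2,n-2])$ of size $i-1$. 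Collecting these specializations over $j=i$ and summing against $t^i(1+t)^{n-2i}$ then yields the asserted identity term by term.

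The main obstacle lives in this last step and is two-fold. First, one must verify that the $p$-degree, which marks $\des$ on the derangement side, is transported so that it lands on $\des(\sigma^{-1})$ rather than on $\des(\sigma)$; this uses the standard but delicate duality between the $\mathrm{Dex}$/excedance description and the $\Des$/major-index description, and it is easy to lose a unit here (which is what the overall prefactor $p$ in $\xi_{n,i}$ absorbs). Second, one must prove that the $\gamma$-coefficients are supported exactly on $\Stab([2,n-2])$ with $i-1$ elements, i.e. that the two boundary positions $1$ and $n-1$ are genuinely excluded and that no two descents are adjacent. This sparse-interior support is the true combinatorial heart of the theorem, and I expect it to be cleanest to prove by analyzing the valley-hopping orbits directly: one shows each orbit has a unique representative whose descent set is sparse and avoids the two endpoints, and that the orbit's contribution factors as $t^{\,i}(1+t)^{\,n-2i}$, with $i-1$ recording the size of that representative's descent set.
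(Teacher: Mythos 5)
You should first note a structural point: the paper does not prove Theorem~\ref{thm01} at all --- it is quoted with citations from Athanasiadis's survey and Shareshian--Wachs, so your proposal can only be measured against the literature proof (SW Remark~5.5, as specialized in Athanasiadis, Theorem~2.15). Your overall architecture does match that proof: form the fixed-point-free Eulerian quasisymmetric functions $Q_{n,j,0}=\sum_{\pi\in\mdn_n,\,\exc(\pi)=j}F_{n,\mathrm{DEX}(\pi)}$, expand $\sum_j Q_{n,j,0}\,t^j$ gamma-positively in the symmetric function ring, and then apply principal specializations carrying an extra variable $p$. (One small mislabel: the $p$-refinement marking $\des$ requires the \emph{nonstable} specializations summed as $\sum_{m}p^m\mathrm{ps}_m$, not the stable one alone, and the leading factor $p$ in $\xi_{n,i}$ comes from the index shift in that specialization lemma --- you half-anticipate this.)

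There are, however, two genuine gaps at exactly the points you flag as the heart of the matter. First, your proposed proof of the support condition via the modified Foata--Strehl valley-hopping action does not work at the required level of refinement: valley hopping preserves neither $\maj(\pi)-\exc(\pi)$ nor the $\mathrm{DEX}$ quasisymmetric data, so its orbits do not factor as $t^i(1+t)^{n-2i}$ with $(p,q)$-weights; it proves only the coarse $p=q=1$ gamma-positivity of $\sum_{\pi\in\mdn_n}t^{\exc(\pi)}$. Shareshian and Wachs instead obtain the quasisymmetric gamma-expansion algebraically, from their closed-form generating function for $\sum_{n,j}Q_{n,j,0}t^jz^n$ together with a symmetric-function identity of Gessel --- no group action is involved. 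Second, your mechanism for the appearance of $\pi^{-1}$ is misplaced: specializing $F_{n,\Des(\sigma)}$ returns statistics of $\sigma$ itself, and no ``interchange with the inverse'' occurs during specialization. Indeed the coefficient you propose, $\sum_{\sigma}F_{n,\Des(\sigma)}$ summed over $\sigma$ with sparse interior descent set, is not even a symmetric function, so it cannot be the gamma-coefficient. The inverse is already built into the expansion of the true gamma-coefficients: they are sums of \emph{ribbon} Schur functions over the sets $S\in\Stab([2,n-2])$ with $|S|=i-1$, and by Gessel's theorem the ribbon indexed by $S$ expands as $\sum_{w:\,\Des(w)=S}F_{n,\Des(w^{-1})}$; specializing this expansion is what produces $p^{\des(w^{-1})}q^{\maj(w^{-1})}$ with the membership condition placed on $\Des(w)$. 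Repairing your proposal thus requires replacing the valley-hopping step by the Gessel/Shareshian--Wachs algebraic identity and inserting the ribbon expansion before specializing.
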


In~\cite{Shareshian10}, Shareshian and Wachs studied the trivariate Eulerian polynomials
\begin{equation*}\label{Wachs}
A_n^{\maj,\des,\exc}(q,p,q^{-1}t)=\sum_{\pi\in\msn}q^{\maj(\pi)-\exc(\pi)}p^{\des(\pi)}t^{\exc(\pi)}.
\end{equation*}
They noted that these polynomials are not $t$-symmetric, see~\cite[p.~2951]{Shareshian10} for details.
For example,
\begin{align*}
A_4^{\maj,\des,\exc}(q,p,q^{-1}t)&=1+(3p+2pq+pq^2+2p^2q^2+2p^2q^3+p^2q^4)t+\\
&(3p+pq+p^2q+3p^2q^2+2p^2q^3+p^3q^4)t^2+pt^4.
\end{align*}

A special case of Theorem~\ref{thm01} is the $q=1$ case, which says that the $(\des, \exc)$-derangement polynomials is $\gamma$-positive,
and so $t$-symmetric. Motivated by this special case, it is natural to consider the $(\des, \exc)$-Eulerian polynomials:
$$A_n(s,t)=\sum_{\pi\in\msn}s^{\des(\pi)}t^{\exc(\pi)}.$$
It is clear that $A_n(1,t)=A_n(t,1)=A_n(t)$.
Let
\begin{equation*}\label{A_n(s,t)gf}
\sum_{n\geqslant 0}A_n(s,t)\frac{u^n}{(1-s)^{n+1}}=\sum_{n\geqslant 0}\sum_{k\geqslant 0}\sum_{r\geqslant 0}f(n,k,r)s^rt^k.
\end{equation*}
In~\cite{Li12}, Li found that
\begin{equation}
f(n,k,r)=[x^{kr}]\frac{(1-x^k)(1-x^{k+1})^n}{(1-x)^{n+1}x^k},
\end{equation}
and showed that the $h*$-polynomial of a half-open hypersimplex is given by
$$\sum_{\substack{\pi\in\msn\\ \exc(\pi)=k}}s^{\des(\pi)}.$$
Moreover, she found that $\#\{\pi\in\msn \mid  \des(\pi)=1, \exc(\pi)=k\}=\binom{n}{k+1}$ for $1\leqslant k\leqslant n-1$.
It should be noted that
the number of descents and excedances will depend on where the fixed points of a permutation are, not just on the number of fixed points.

For $1\leqslant n\leqslant 5$, we stumble upon the following $t$-symmetric decompositions:
\begin{align*}
A_1(s,t)&=1,~A_2(s,t)=1+st=1+t+(s-1)t,\\
A_3(s,t)&=1+(3s+s^2)t+st^2={(1+(1+s)^2t+t^2)}+(s-1)t(1+t),\\
A_4(s,t)&=1+(6s+5s^2)t+(4s+6s^2+s^3)t^2+st^3\\
=&(1+t){(1+5s(1+s)t+t^2)}+(s-1)t{(1+(1+s)^2t+t^2)},\\
A_5(s,t)&=1+(10s+15s^2+s^3)t+(10s+36s^2+19s^3+s^4)t^2+\\
&(5s+15s^2+6s^3)t^3+st^4\\
=&(1+(1+9s+15s^2+s^3)t+(1+14s+36s^2+14s^3+s^4)t^2+\\&(1+9s+15s^2+s^3)t^3+t^4)+(s-1)t(1+t)(1+5s(1+s)t+t^2).
\end{align*}
For convenience, we list the symmetric decomposition of $A_n(2,t)$ for $n\leqslant 5$:
\begin{align*}
&A_1(2,t)=1,\\
&A_2(2,t)=1+t+t,\\
&A_3(2,t)=(1+9t+t^2)+t(1+t),\\
&A_4(2,t)=(1+31t+31t^2+t^3)+t(1+9t+t^2),\\
&A_5(2,t)=(1+87t+301t^2+87t^3+t^4)+t(1+31t+31t^2+t^3).
\end{align*}
It is well known that $A_n(2,1)$ is the Fubini number~\cite[A000670]{Sloane}, which is the number of ordered partitions of $[n]$.

We now recall an elementary result.
\begin{proposition}[{\cite{Beck2010,Branden18}}]
Let $f(x)$ be a polynomial of degree $n$.
There is a unique decomposition $f(x)= a(x)+xb(x)$, where
\begin{equation}
\label{ax-bx}
a(x)=\frac{f(x)-x^{n+1}f(1/x)}{1-x},~b(x)=\frac{x^nf(1/x)-f(x)}{1-x}.
\end{equation}
\end{proposition}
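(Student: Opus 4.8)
The plan is to read this Proposition in the sense intended by its sources: the decomposition $f(x)=a(x)+xb(x)$ is required to have $a$ and $b$ both \emph{palindromic}, with $a$ symmetric of degree $n$ (that is, $x^na(1/x)=a(x)$) and $b$ symmetric of degree $n-1$ (that is, $x^{n-1}b(1/x)=b(x)$). Without these symmetry constraints the word ``unique'' would be empty, so the content of the statement is that exactly one such pair exists and that it is given by the displayed formulas \eqref{ax-bx}. I would treat existence and uniqueness separately, the whole argument being driven by the single involution $x\mapsto 1/x$.

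For existence, I first check that the displayed $a(x)$ and $b(x)$ are genuine polynomials. Since $f(x)-x^{n+1}f(1/x)$ evaluates to $f(1)-f(1)=0$ at $x=1$, the factor $1-x$ divides it, and the same holds for the numerator of $b$; hence $a$ and $b$ are polynomials with $\deg a\le n$ and $\deg b\le n-1$. Next I verify $f=a+xb$ by combining the two fractions over the common denominator $1-x$: the two occurrences of $x^{n+1}f(1/x)$ cancel and leave $\frac{f(x)-xf(x)}{1-x}=f(x)$. Finally I establish the symmetry. Clearing denominators gives $(1-x)a(x)=f(x)-x^{n+1}f(1/x)$; replacing $x$ by $1/x$ and multiplying through by the appropriate power of $x$ collapses this to $x^na(1/x)=a(x)$, and the identical manipulation applied to $(1-x)b(x)=x^nf(1/x)-f(x)$ yields $x^{n-1}b(1/x)=b(x)$. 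Thus the explicit $a,b$ form a symmetric decomposition of the prescribed type.

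For uniqueness, suppose $f=a+xb$ is \emph{any} decomposition with $a,b$ palindromic of degrees $n$ and $n-1$. Applying the involution, I compute
$$x^{n+1}f(1/x)=x^{n+1}a(1/x)+x^nb(1/x),$$
and the two symmetry relations turn the right-hand side into $xa(x)+xb(x)$, so $x^{n+1}f(1/x)=x\bigl(a(x)+b(x)\bigr)$. Together with $f=a+xb$ this is a linear system in the unknowns $a$ and $b$; eliminating $b$ forces $(1-x)a(x)=f(x)-x^{n+1}f(1/x)$, which is precisely the displayed formula for $a$, and then $b=(f-a)/x$ simplifies to the displayed formula for $b$. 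Hence the symmetric decomposition is unique.

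The computations here are entirely routine, so the only point that really demands care is the bookkeeping of the two symmetry centers: $a$ must be palindromic as a degree-$n$ polynomial while $b$ must be palindromic as a degree-$(n-1)$ polynomial. It is exactly this mismatch of centers that produces the extra factor of $x$ in $f=a+xb$ and that makes the system nonsingular. Keeping the exponents of $x$ straight while passing through $x\mapsto 1/x$ is the main thing to get right; once the auxiliary identity $x^{n+1}f(1/x)=x(a+b)$ is in hand, both existence and uniqueness follow at once.
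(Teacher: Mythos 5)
Your proof is correct. Note that the paper offers no proof of this proposition at all --- it is quoted from \cite{Beck2010,Branden18} --- so there is no in-paper argument to compare against; your argument is the standard one from those sources. Two things you did are worth affirming explicitly: first, you correctly supplied the missing hypotheses, namely that $a$ is palindromic with center $n/2$ (i.e.\ $x^na(1/x)=a(x)$) and $b$ with center $(n-1)/2$ (i.e.\ $x^{n-1}b(1/x)=b(x)$), which the paper only indicates in the sentence following the proposition and without which ``unique'' is indeed vacuous; second, your uniqueness step via the auxiliary identity $x^{n+1}f(1/x)=x\bigl(a(x)+b(x)\bigr)$ is exactly the right linear-elimination argument, and all the algebra checks out (divisibility of both numerators by $1-x$ at $x=1$, the degree bounds $\deg a\leqslant n$, $\deg b\leqslant n-1$, and the two symmetry verifications under $x\mapsto 1/x$). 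The argument is complete as written.
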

The ordered pair of polynomials $(a(x),b(x))$ is called the {\it symmetric decomposition} of $f(x)$, since $a(x)$ and $b(x)$ are both symmetric.
Let $f(x)=\sum_{i=0}^nf_ix^i$ be a polynomial with nonnegative coefficients.
Following~\cite[Definition 2.9]{Schepers13}, the polynomial $f(x)$ is {\it alternatingly increasing} if
$$f_0\leqslant f_n\leqslant f_1\leqslant f_{n-1}\leqslant\cdots \leqslant f_{\lrf{{(n+1)}/{2}}}.$$
There has been much recent work devoted to alternatingly increasing polynomials, see~\cite{Branden18,Branden22}.
\begin{lemma}[{\cite[Lemma~2.1]{Beck2019}}]\label{lemma-alt}
Let $(a(x),b(x))$ be the symmetric decomposition of $f(x)$, where $\deg f(x)=\deg a(x)=n$ and $\deg b(x)=n-1$.
Then $f(x)$ is alternatingly increasing if and only if both $a(x)$ and $b(x)$ are unimodal.
\end{lemma}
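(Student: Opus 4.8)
The plan is to translate both the alternating-increasing condition on $f$ and the unimodality of $a$ and $b$ into lists of coefficient inequalities, and then to verify that these two lists coincide. Write $a(x)=\sum_{i=0}^n a_ix^i$ and $b(x)=\sum_{i=0}^{n-1}b_ix^i$. Since $a$ and $b$ are symmetric of degrees $n$ and $n-1$ respectively, their coefficients satisfy $a_i=a_{n-i}$ and $b_i=b_{n-1-i}$, so that the unimodality of $a$ is equivalent to the chain $a_0\le a_1\le\cdots\le a_{\lrf{n/2}}$, and the unimodality of $b$ to the chain $b_0\le b_1\le\cdots\le b_{\lrf{(n-1)/2}}$. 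Comparing coefficients in the identity $f(x)=a(x)+xb(x)$ gives $f_j=a_j+b_{j-1}$ for every $j$, with the conventions $b_{-1}=b_n=0$; in particular $f_0=a_0$.

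First I would use the two symmetry relations to evaluate the differences that occur along the alternating-increasing chain $f_0\le f_n\le f_1\le f_{n-1}\le\cdots\le f_{\lrf{(n+1)/2}}$. Writing $f_{n-k}=a_{n-k}+b_{n-k-1}=a_k+b_k$ and $f_k=a_k+b_{k-1}$, and likewise $f_{n-k+1}=a_{k-1}+b_{k-1}$, one obtains
$$f_{n-k}-f_k=b_k-b_{k-1},\qquad f_k-f_{n-k+1}=a_k-a_{k-1},$$
together with the boundary value $f_n-f_0=b_0$. Thus each consecutive difference along the chain is either a difference of consecutive $a$-coefficients or a difference of consecutive $b$-coefficients, and as $k$ runs toward the centre each such difference is produced exactly once.

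Consequently, the single chain of inequalities defining ``alternatingly increasing'' splits into the two interleaved chains $a_0\le a_1\le\cdots$ and $b_0\le b_1\le\cdots$, each running up to its appropriate midpoint, supplemented by the base inequality $b_0\ge 0$. By the symmetry reformulation above, the $a$-chain is precisely the unimodality of $a$ and the $b$-chain is precisely the unimodality of $b$. The base inequality $b_0\ge 0$ (equivalently $f_0\le f_n$), together with the $b$-chain, forces all $b_i\ge 0$, while $a_0=f_0\ge 0$ together with the $a$-chain forces all $a_i\ge 0$, so both symmetric polynomials are genuinely nonnegative and unimodal. Reading the resulting equivalence in both directions then yields the lemma.

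The only real obstacle I expect is bookkeeping: keeping the index ranges and the floor functions aligned in the cases $n$ even and $n$ odd, and confirming that each difference $a_k-a_{k-1}$ and $b_k-b_{k-1}$ is accounted for exactly once, with no inequality omitted or double-counted. The boundary term $b_0\ge 0$ is the single place where the nonnegativity of the coefficients of $f$ genuinely enters, reconciling the purely order-theoretic rearrangement with the positivity built into the definition of an alternatingly increasing polynomial.
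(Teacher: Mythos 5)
The paper never proves this lemma---it is quoted from Beck--Jochemko--McCullough \cite{Beck2019} without proof---so there is no internal argument to compare against; what you have written is essentially the standard proof of that cited result. Your bookkeeping is correct and complete: from $f_j=a_j+b_{j-1}$ and the symmetries $a_i=a_{n-i}$, $b_i=b_{n-1-i}$ one indeed gets
$$f_n-f_0=b_0,\qquad f_k-f_{n-k+1}=a_k-a_{k-1},\qquad f_{n-k}-f_k=b_k-b_{k-1},$$
and checking both parities of $n$ confirms that the alternating chain $f_0\leqslant f_n\leqslant f_1\leqslant f_{n-1}\leqslant\cdots\leqslant f_{\lfloor(n+1)/2\rfloor}$ consists of exactly the differences $a_k-a_{k-1}$ for $1\leqslant k\leqslant\lfloor n/2\rfloor$, the differences $b_k-b_{k-1}$ for $1\leqslant k\leqslant\lfloor(n-1)/2\rfloor$, and the single boundary term $b_0$, each occurring once.

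One step, however, does not hold as you state it. Your own tally shows that ``alternatingly increasing'' equals ($a$-chain) plus ($b$-chain) plus ($b_0\geqslant 0$), and the inequality $b_0\geqslant 0$ follows neither from unimodality of $b$ nor---contrary to your closing remark---from nonnegativity of the coefficients of $f$. Concretely, $f(x)=5+4x$ has symmetric decomposition $a(x)=5+5x$, $b(x)=-1$: both $a$ and $b$ are symmetric and unimodal, $f$ has nonnegative coefficients, yet $f_0=5>4=f_1$, so $f$ is not alternatingly increasing. Thus the converse direction of the lemma is valid only under the convention, standard in this $h^*$-polynomial literature and the one in force in \cite{Beck2019}, that ``unimodal'' means unimodal with nonnegative coefficients; equivalently, one must add the hypothesis $b_0\geqslant 0$, since a symmetric polynomial whose coefficients weakly increase to the middle is nonnegative precisely when its constant term is. With that reading, your forward direction (which correctly extracts $b_0\geqslant 0$ from $f_0\leqslant f_n$ and propagates nonnegativity up both chains) and the converse both go through, and the proof is complete.
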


We can now conclude the main result of this paper.
\begin{theorem}\label{thm20}
For any $n\geqslant 1$, we define the $t$-symmetric polynomial
$$a_n(s,t) = \frac{A_n(s,t) - t^n A_n(s,1/t)}{1-t}.$$
Then the $(\des,~\exc)$-Eulerian polynomial $A_n(s,t)$ has the $t$-symmetric decomposition
\begin{equation}\label{Anst-decom}
A_n(s,t)=a_n(s,t)+(s-1)ta_{n-1}(s,t),
\end{equation}
\end{theorem}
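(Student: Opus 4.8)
The plan is to split the proof into a formal reduction and a single combinatorial core, then to attack that core through the half-open hypersimplex.

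First I would dispose of the purely formal part. Substituting the definitions of $a_n$ and $a_{n-1}$ into \eqref{Anst-decom}, clearing the factor $1-t$, cancelling the term $A_n(s,t)$ and rearranging, one finds that \eqref{Anst-decom} is equivalent to
\[
A_n(s,t)+(s-1)A_{n-1}(s,t)=t^{\,n-1}\!\left[A_n(s,1/t)+(s-1)A_{n-1}(s,1/t)\right].
\]
In other words, writing $P_n(s,t):=A_n(s,t)+(s-1)A_{n-1}(s,t)$, the theorem is equivalent to the single assertion that $P_n(s,t)$ is palindromic in $t$ of degree $n-1$. Expanding $A_n(s,t)=\sum_k a_{n,k}(s)t^k$ with $a_{n,k}(s)=\sum_{\exc\pi=k}s^{\des\pi}$, this is the coefficientwise family
\[
a_{n,k}(s)-a_{n,n-1-k}(s)=(s-1)\bigl(a_{n-1,n-1-k}(s)-a_{n-1,k}(s)\bigr),\qquad 0\le k\le n-1,
\]
whose two extreme cases $k=0,n-1$ are immediate from $a_{n,0}(s)=1$, $a_{n,n-1}(s)=s$ (identity and long cycle) together with $a_{n-1,n-1}(s)=0$.

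To handle the degree-$(n-1)$ reversal $t\mapsto 1/t$ I would introduce the fixed-point refinement $A_n(s,t,w)=\sum_{\pi\in\msn}s^{\des\pi}t^{\exc\pi}w^{\fix\pi}$ and exploit the reverse--complement involution $\pi\mapsto\widehat\pi$, $\widehat\pi(i)=n+1-\pi(n+1-i)$, which preserves $\des$ and $\fix$ and sends $\exc$ to the number of deficiencies $n-\fix-\exc$. This yields the functional equation $A_n(s,t,w)=t^{n}A_n(s,1/t,w/t)$, and specializing $w=t$ gives $t^{\,m-1}A_m(s,1/t)=\sum_{\pi\in\ms_m}s^{\des\pi}t^{\exc\pi+\fix\pi-1}$ for every $m$. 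Substituting this for $m=n$ and $m=n-1$ into the palindromicity statement turns it into the combinatorial identity
\begin{equation*}
\sum_{\pi\in\msn}s^{\des\pi}t^{\exc\pi}\bigl(1-t^{\fix\pi-1}\bigr)=(s-1)\sum_{\tau\in\ms_{n-1}}s^{\des\tau}t^{\exc\tau}\bigl(t^{\fix\tau}-1\bigr).
\tag{$\ast$}
\end{equation*}
As a consistency check, at $s=1$ both sides of $(\ast)$ vanish precisely because the Eulerian polynomial $A_n(t)$ is palindromic, so $(\ast)$ is genuinely an $s$-deformation of that classical symmetry and all the content lives in the interior coefficients.

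It remains to prove $(\ast)$, which is the heart of the matter. One route is bijective: grouping $\msn$ by whether $n$ is a fixed point peels off $-\sum_{\tau}s^{\des\tau}t^{\exc\tau}(t^{\fix\tau}-1)$ and reduces $(\ast)$ to a statement over $\{\pi\in\msn:\pi(n)\ne n\}$. I expect the real difficulty here to be that neither deletion of the letter $n$ (clean for descents but not for excedances) nor deletion of $n$ from its cycle (clean for excedances but not for descents) transports the weight correctly object by object; indeed one checks on $\mathfrak S_3$ that cycle-deletion balances $(\ast)$ only after summing over all $\tau$, so a genuinely global involution is needed. The more promising route is Ehrhart-theoretic: by Li's theorem $a_{n,k}(s)$ is the $h^\ast$-polynomial of the half-open hypersimplex, so dividing the coefficient identity by $(1-s)^{n+1}$ rewrites it as
\[
H_{n,k}(s)-H_{n,n-1-k}(s)=H_{n-1,k}(s)-H_{n-1,n-1-k}(s),\qquad H_{m,k}:=\frac{a_{m,k}(s)}{(1-s)^{m+1}},
\]
an identity between Ehrhart series at complementary levels $k$ and $n-1-k$. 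I would derive this from the reflection $x\mapsto\mathbf1-x$ of $[0,1]^n$, which interchanges the two levels, combined with half-open Ehrhart--Macdonald reciprocity to pass between dimensions $n$ and $n-1$. In either approach the principal obstacle is exactly the phenomenon flagged in the introduction: the descent statistic, unlike the excedance statistic, is sensitive to \emph{where} the fixed points of a permutation sit, so the argument must convert this descent defect precisely into the factor $(s-1)$ combinatorially, or into the half-open facet bookkeeping geometrically.
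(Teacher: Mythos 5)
Your formal scaffolding is correct, and it is genuinely different from the paper's route: the equivalence of \eqref{Anst-decom} with the palindromicity $P_n(s,t)=t^{n-1}P_n(s,1/t)$ for $P_n=A_n+(s-1)A_{n-1}$, the functional equation $A_n(s,t,w)=t^nA_n(s,1/t,w/t)$ coming from reverse--complement, the specialization $t^{m-1}A_m(s,1/t)=\sum_{\pi\in\ms_m}s^{\des(\pi)}t^{\exc(\pi)+\fix(\pi)-1}$, and the equivalence of the theorem with your identity $(\ast)$ and with the Ehrhart reformulation $H_{n,k}-H_{n,n-1-k}=H_{n-1,k}-H_{n-1,n-1-k}$ all check out (for $n=2,3$ both sides of $(\ast)$ equal $(1-s)(1-t^{n-1})$, and I verified $n=4$ as well). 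But the proposal stops exactly where the theorem begins: $(\ast)$, which you yourself call the heart of the matter and which is equivalent to the full statement, is never proved. The bijective route ends in your own admission that object-by-object deletion fails and that ``a genuinely global involution is needed,'' which is a statement of the problem, not a solution; and the geometric route is a single sentence of intent that moreover names the wrong tool. Ehrhart--Macdonald reciprocity relates a polytope to its interior within the \emph{same} dimension; it does not effect the passage from the $n$-cube to the $(n-1)$-cube. What actually closes the argument is reflection plus \emph{projection}: $x\mapsto\mathbf{1}-x$ flips which facet of the slab $\{k<\sum_ix_i\leqslant k+1\}$ is open and yields $H_{n,k}-H_{n,n-1-k}=E_{n,k+1}-E_{n,k}$, where $E_{n,j}$ denotes the Ehrhart series of the hyperplane slice $\{x\in[0,1]^n:\sum_ix_i=j\}$; then forgetting the last coordinate identifies that slice with the closed slab $\{j-1\leqslant\sum_ix_i\leqslant j\}$ of $[0,1]^{n-1}$, giving $E_{n,j}=H_{n-1,j-1}+E_{n-1,j-1}$, and chaining these with the reflection identity one dimension down (which pairs level $k-1$ with $n-1-k$, not $k$ with $n-1-k$) produces the claim, up to bookkeeping at the extreme levels where Li's convention keeps the lowest simplex closed. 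None of this appears in your write-up, so as it stands the proposal is a correct reduction plus an unexecuted plan, i.e.\ it has a genuine gap at its core step.

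For comparison, the paper avoids the combinatorial core entirely: it takes the Foata--Han closed form for $\sum_{n\geqslant0}A_n(s,t)u^n/(1-s)^{n+1}$ and verifies $b_n(s,t)=(s-1)a_{n-1}(s,t)$ by a direct telescoping computation with generating functions, with no involution and no geometry. This suggests the quickest repair of your approach if you do not want to carry out the reflection-plus-projection argument in full: prove $(\ast)$ analytically, since the fixed-point-refined series $\sum_{\pi\in\msn}s^{\des(\pi)}t^{\exc(\pi)}w^{\fix(\pi)}$ also has a closed form in the same Foata--Han paper (its title statistic is precisely the quadruple distribution including $\fix$), so both sides of $(\ast)$ have explicit generating functions that can be compared. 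Either completion would make your proof a genuine alternative to the paper's, and the combinatorial--geometric version would add an explanation, in terms of half-open facet bookkeeping, of the phenomenon the introduction flags --- that descents, unlike excedances, depend on where the fixed points sit.
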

\begin{proof}
Define
$$f(s,t,u)= \sum_{n\geqslant0} A_n(s,t) \frac{u^n}{(1-s)^{n+1}}.$$
According to~\cite[Eq~(1.15),~Eq.~(1.18)]{Foata08}, we have
\begin{equation*}\label{A_n(s,t)gf}
\sum_{n\geqslant 0}A_n(s,t)\frac{u^n}{(1-s)^{n+1}}=\sum_{r\geqslant 0}s^r\frac{1-t}{(1-u)^{r+1}(1-ut)^{-r}-t(1-u)}.
\end{equation*}
So one has
$$f(s,t,u)=\sum_{r\geqslant0}s^r\frac{(1-t)(1-ut)^r}{(1-u)\left((1-u)^r-t(1-ut)^r\right)}.$$

Note that $\deg A_n(s,t)=n-1$ as a polynomial in $t$.
Suppose that $(a_n(s,t),b_n(s,t))$ is the symmetric decomposition of $A_n(s,t)$, i.e.,
$A_n(s,t)=a_n(s,t)+tb_n(s,t)$.
Applying~\eqref{ax-bx}, we have
$$
a_n(s,t) = \frac{A_n(s,t) - t^n A_n(s,1/t)}{1-t}$$
and
$$
b_n(s,t) = \frac{ t^{n-1} A_n(s,1/t) - A_n(s,t)}{1-t}.$$
Thus
\begin{align*}
\sum_{n\geqslant 0} a_n(s,t) \frac{u^n}{(1-s)^{n+1}}
&=\frac{1}{1-t}\left[\sum_{n\geqslant0} A_n(s,t) \frac{u^n}{(1-s)^{n+1}}-\sum_{n\geqslant 0} A_n(s,1/t) \frac{(tu)^n}{(1-s)^{n+1}}\right]\\
&=\frac{1}{1-t}[f(s,t,u)-f(s,1/t,tu)]\\
&=\sum_{r\geqslant0}s^r\frac{(1-ut)^r}{(1-u)\left((1-u)^r-t(1-ut)^r\right)}\\
&\quad -\sum_{r\geqslant0}s^r\frac{-(1-u)^r}{(1-ut)\left(t(1-ut)^r-(1-u)^r\right)}\\
&=\sum_{r\geqslant0}s^r\frac{(1-ut)^{r+1}-(1-u)^{r+1}}{(1-u)(1-ut)\left((1-u)^r-t(1-ut)^r\right)}.
\end{align*}
We now show that $b_n(s,t)=(s-1)a_{n-1}(s,t)$ for $n\geqslant 1$.
Note that
$$b_n(s,t)=\frac{A_n(s,t)-a_n(s,t)}{t},$$
and
$$\sum_{n\geqslant1} (s-1)a_{n-1}(s,t) \frac{u^n}{(1-s)^{n+1}}=-u\sum_{n\geqslant0}a_{n}(s,t) \frac{u^n}{(1-s)^{n+1}}.$$
Set $a_0(s,t)=0$.
Then we obtain
\begin{align*}
&\sum_{n\geqslant1} \left(b_{n}(s,t)-(s-1)a_{n-1}(s,t)\right) \frac{u^n}{(1-s)^{n+1}}\\
&=\frac{1}{t}\sum_{n\geqslant1}\left(A_n(s,t)-(1-ut)a_n(s,t)\right) \frac{u^n}{(1-s)^{n+1}}\\
&=-\frac{1}{t(1-s)}+\frac{1}{t}\sum_{n\geqslant0}\left(A_n(s,t)-(1-ut)a_n(s,t)\right) \frac{u^n}{(1-s)^{n+1}}\\
&=-\frac{1}{t(1-s)}+\frac{1}{t}\sum_{r\geqslant0}s^r\frac{\left((1-ut)-(1-u)t\right)(1-ut)^r}{(1-u)\left((1-u)^r-t(1-ut)^r\right)}\\
&\quad -\frac{1}{t}\sum_{r\geqslant0}s^r\frac{(1-ut)^{r+1}-(1-u)^{r+1}}{(1-u)\left((1-u)^r-t(1-ut)^r\right)}\\
&=-\frac{1}{t(1-s)}+\frac{1}{t}\sum_{r\geqslant0}s^r\frac{(1-u)\left((1-u)^r-t(1-ut)^r\right)}{(1-u)\left((1-u)^r-t(1-ut)^r\right)}\\
&=-\frac{1}{t(1-s)}+\frac{1}{t}\sum_{r\geqslant0}s^r=0,
\end{align*}
which yields the desired result. So we obtain~\eqref{Anst-decom}.
\end{proof}

Theorem~\ref{thm20} may be seen as a key step to understand the polynomials $A_n^{\maj,\des,\exc}(q,p,q^{-1}t)$,
since we can consider the $t$-symmetric decompositions of these polynomials. As illustrations,
\begin{align*}
&A_1^{\maj,\des,\exc}(q,p,q^{-1}t)=1,\\
&A_2^{\maj,\des,\exc}(q,p,q^{-1}t)=1+pt=1+t+(p-1)t,\\
&A_3^{\maj,\des,\exc}(q,p,q^{-1}t)=1+(2p+pq+p^2q^2)t+pt^2\\
&=(1+(1+p+pq+p^2q^2)t+t^2)+(p-1)t(1+t),\\
&A_4^{\maj,\des,\exc}(q,p,q^{-1}t)=1+(3p+2pq+pq^2+2p^2q^2+2p^2q^3+p^2q^4)t+\\
&(3p+pq+p^2q+3p^2q^2+2p^2q^3+p^3q^4)t^2+pt^3\\
&=(1+t)(1+(p+p^2q^2)(2+2q +q^2)t+t^2)+(p-1)t(1+(1+pq+pq^2+p^2q^4)t+t^2).
\end{align*}

Based on empirical evidence, we now propose a conjecture.
\begin{conjecture}
Let $p>1$ and $q\geqslant 1$ be two given real numbers. Then the two polynomials in the symmetric decomposition of
$A_n^{\maj,\des,\exc}(q,p,q^{-1}t)$ are both $\gamma$-positive. So the polynomial
$A_n^{\maj,\des,\exc}(q,p,q^{-1}t)$ is $t$-alternatingly increasing and it is unimodal with mode in the middle.
\end{conjecture}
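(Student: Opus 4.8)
The plan is to reduce the conjecture to a single positivity statement and then attack that statement by combining Theorem~\ref{thm01} with a fixed-point expansion. Write $B_n(q,p,t):=A_n^{\maj,\des,\exc}(q,p,q^{-1}t)$; this is a polynomial of degree $n-1$ in $t$ (its top coefficient, coming from the $n$-cycle $23\cdots n1$, equals $p$) with constant term $1$. Let $(\alpha_n(q,p,t),\beta_n(q,p,t))$ be its symmetric decomposition as in~\eqref{ax-bx}, so that $B_n=\alpha_n+t\beta_n$ with $\alpha_n$ symmetric of degree $n-1$ and $\beta_n$ symmetric of degree $n-2$. A $\gamma$-positive polynomial is symmetric and unimodal, so once $\alpha_n$ and $\beta_n$ are known to be $\gamma$-positive, Lemma~\ref{lemma-alt} shows that $B_n$ is alternatingly increasing, and hence unimodal with mode in the middle. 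Thus everything reduces to proving that, for $p>1$ and $q\geq1$, both $\alpha_n$ and $\beta_n$ are $\gamma$-positive in $t$.

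First I would record the generating functions of $\alpha_n$ and $\beta_n$, repeating the computation in the proof of Theorem~\ref{thm20} but keeping the variable $q$. Starting from a $(\maj,\des,\exc)$ refinement of the Foata–Han generating function of~\cite{Foata08} used there, one applies~\eqref{ax-bx} termwise to the suitably normalized generating series of the $B_n$ (the $q$-analogue of $\sum_n A_n(s,t)u^n/(1-s)^{n+1}$) to obtain closed forms for the corresponding series of $\alpha_n$ and $\beta_n$. These make the coefficients explicitly computable. Their main purpose, however, is to expose the obstruction: the clean bivariate identity $b_n=(s-1)a_{n-1}$ of Theorem~\ref{thm20} is destroyed once $q$ is present — already $\beta_4\neq(p-1)\alpha_3$ — so the positivity cannot be inherited from a single lower polynomial and a genuinely new mechanism is required.

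The positivity engine is Theorem~\ref{thm01}. The derangement polynomial $D_m(q,p,t):=\sum_{\pi\in\mdn_m}q^{\maj(\pi)-\exc(\pi)}p^{\des(\pi)}t^{\exc(\pi)}$ is, by that theorem, palindromic and $\gamma$-positive with \emph{manifestly} nonnegative $\gamma$-coefficients $\xi_{m,i}(p,q)$. I would therefore expand $B_n$ over derangement blocks according to the fixed points of $\pi$ and push~\eqref{ax-bx} through this expansion. Since $\exc$ depends only on the relative order of the non-fixed values while $\maj$ and $\des$ depend on where the fixed points sit, this is not the naive convolution $\sum_m\binom nm D_m$; the correct bookkeeping is provided by the Shareshian–Wachs Eulerian quasisymmetric functions~\cite{Shareshian10}, which track $(\maj,\exc)$, and in refined form $\des$, compatibly with the splitting of a permutation into its fixed points and a derangement. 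Substituting the Theorem~\ref{thm01} expansion of each block and re-expanding into the palindromic bases $t^i(1+t)^{n-1-2i}$ and $t^i(1+t)^{n-2-2i}$ should yield explicit formulas for the $\gamma$-coefficients of $\alpha_n$ and $\beta_n$ in terms of the $\xi_{m,i}(p,q)$.

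The crux, and the reason the hypothesis $p>1$ is needed, is that this re-expansion is not sign-positive: each block $D_m$ is palindromic about its own centre $m/2$, whereas $\alpha_n$ and $\beta_n$ are palindromic about the different centres $(n-1)/2$ and $(n-2)/2$, and converting between palindromic bases of different centres mixes signs. Consequently the $\gamma$-coefficients one obtains are polynomials in $p,q$ carrying negative terms — compare $p+pq+p^2q^2-1$ for $\alpha_3$ and $(p-1)(pq+pq^2+p^2q^4-1)$ for $\beta_4$ — rather than polynomials with nonnegative coefficients. So the final step is a true inequality, not a counting identity: one must prove each of these polynomials is positive throughout the region $p>1,\,q\geq1$. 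For $\beta_n$ the data suggest first extracting a factor $(p-1)$ and reducing to positivity of a polynomial for $p\geq1,q\geq1$; for $\alpha_n$, whose lowest $\gamma$-coefficient is the harmless constant $1$, I would look for a dominance estimate showing that the nonnegative part of each $\gamma$-coefficient outweighs its negative part as soon as $p>1$. I expect the two genuine obstacles to be: (i) carrying $\maj$ correctly through both the fixed-point expansion and the change of palindromic centre — precisely the point where the Shareshian–Wachs machinery is indispensable and where the bivariate shortcut of Theorem~\ref{thm20} fails; and (ii) turning the resulting, non-manifestly-positive $\gamma$-coefficients into a rigorous dominance inequality valid exactly on $p>1,\,q\geq1$.
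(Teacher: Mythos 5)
You should first be clear about the status of the statement: in the paper this is a \emph{conjecture}, proposed ``based on empirical evidence,'' and the authors give no proof. So there is no paper argument to compare against; the only question is whether your proposal itself constitutes a proof, and it does not. What you have written is a research programme in which every load-bearing step is conditional. Concretely: (i) the claimed closed forms for the generating series of $\alpha_n$ and $\beta_n$ with $q$ present are never derived, and the fixed-point expansion via the Shareshian--Wachs Eulerian quasisymmetric functions is named but not executed — you yourself identify the $(\maj,\des)$ bookkeeping across fixed points as the place ``where the bivariate shortcut of Theorem~\ref{thm20} fails,'' which is an accurate diagnosis (your check that $\beta_4\neq(p-1)\alpha_3$ is correct, and it explains why the paper's identity $b_n(s,t)=(s-1)a_{n-1}(s,t)$ gives no help here), but a diagnosis is not a mechanism; (ii) even granting explicit $\gamma$-coefficient formulas, your final step — ``one must prove each of these polynomials is positive throughout the region $p>1$, $q\geqslant 1$'' — is precisely the conjecture restated, and you offer only the intention to ``look for a dominance estimate.'' Nothing in the proposal establishes nonnegativity of a single $\gamma$-coefficient for general $n$; the only verified cases are the small examples already printed in the paper.

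The parts of the proposal that are sound are the framing steps: the degree count ($B_n$ has degree $n-1$ in $t$ with top coefficient $p$ and constant term $1$, so $\deg\alpha_n=n-1$, $\deg\beta_n=n-2$ for $p>1$), and the reduction via Lemma~\ref{lemma-alt} from $\gamma$-positivity of both parts to the alternatingly increasing property and hence unimodality with mode in the middle — this matches the internal logic of the conjecture itself (``So the polynomial \dots''). But the core difficulty is exactly where your plan stops: Theorem~\ref{thm01} gives manifestly nonnegative $\gamma$-coefficients $\xi_{m,i}(p,q)$ only for derangement blocks palindromic about their own centres $m/2$, and re-expanding about the centres $(n-1)/2$ and $(n-2)/2$ of $\alpha_n$ and $\beta_n$ mixes signs, so the approach as stated carries no positivity mechanism into the region $p>1$, $q\geqslant 1$. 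Until the inequality in step (ii) is actually proved — for all $n$, not inferred from data — the conjecture remains open, and your proposal should be presented as a strategy, not a proof.
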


Denote the coefficient of $u^n$ in a power series $f(u)$ by $[u^n](f(u))$. In the proof of Theorem~\ref{thm20}, we obtain
\begin{align*}
\frac{a_n(s,t)}{(1-s)^{n+1}}&=[u^n]\left(
\sum_{r\geqslant0}s^r\frac{(1-ut)^{r+1}-(1-u)^{r+1}}{(1-u)(1-ut)\left((1-u)^r-t(1-ut)^r\right)}\right)\\
&=\sum_{r\geqslant0}s^r[u^n]\left(\frac{(1-ut)^{r+1}-(1-u)^{r+1}}{(1-u)(1-ut)\left((1-u)^r-t(1-ut)^r\right)}\right).
\end{align*}
By partial fraction decomposition, we have
\begin{align*}
\frac{a_n(s,t)}{(1-s)^{n+1}}
&=\sum_{r\geqslant0}s^r[u^n]\left(\frac{-1}{t(1-u)}+\frac{-1}{1-ut}+\frac{(1-u)^{r-1}-t^2(1-ut)^{r-1}}{t((1-u)^r-t(1-ut)^r)}\right)\\
&=-\frac{1+t^{n+1}}{t(1-s)}+\frac{1}{t}\sum_{r\geqslant0}s^r[u^n]f(u),
\end{align*}
where $f(u)=\frac{(1-u)^{r-1}-t^2(1-ut)^{r-1}}{(1-u)^r-t(1-ut)^r}$.

Set $[u^n](f(u))=f_n$. By comparing the coefficient of $u^n$ in both sides of $$((1-u)^r-t(1-ut)^r)f(u)=(1-u)^{r-1}-t^2(1-ut)^{r-1},$$
 we obtain
$$\sum_{j=0}^n(-1)^j\binom{r}{j}[j+1]_tf_{n-j}=(-1)^n\binom{r-1}{n}[n+2]_t.$$
with $[n]_t=\frac{1-t^n}{1-t}$.  Define $\alpha_j=\binom{r}{j}[j+1]_t$ and $\beta_j=(-1)^j\binom{r-1}{j}[j+2]_t$ for all $j\geqslant0$. Clearly, $\alpha_j=0$ with $j\geqslant r+1$ and $\beta_j=0$ with $j\geqslant r$. Define ${\bf M}_{n,r}=(m_{ij})$ to be the matrix $(n+1)\times(n+1)$, where $m_{ij}=\alpha_{i-j}$ with $i=0,1,\ldots,n$ and $j=0,1,\ldots,n-1$, and $m_{in}=\beta_i$ for all $i=0,1,\ldots,n$. Clearly, $f_n=\det({\bf M}_{n,r})$.
Therefore, we can state the following result.
\begin{theorem}\label{thT1}
We have
\begin{align*}
a_n(s,t)&=-\frac{(1+t^{n+1})(1-s)^n}{t}+\frac{(1-s)^{n+1}}{t}\sum_{r\geqslant0}s^r\det({\bf M}_{n,r}).
\end{align*}
\end{theorem}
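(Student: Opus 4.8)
The plan is to assemble Theorem~\ref{thT1} from two ingredients: the generating-function identity for $a_n(s,t)/(1-s)^{n+1}$ already produced in the proof of Theorem~\ref{thm20}, and a determinantal evaluation of the coefficients $f_n=[u^n]f(u)$. The starting point is the closed form
$$\frac{a_n(s,t)}{(1-s)^{n+1}}=\sum_{r\geqslant0}s^r[u^n]\left(\frac{(1-ut)^{r+1}-(1-u)^{r+1}}{(1-u)(1-ut)\left((1-u)^r-t(1-ut)^r\right)}\right),$$
so the first task is to split the summand by partial fractions. I would verify the decomposition
$$\frac{(1-ut)^{r+1}-(1-u)^{r+1}}{(1-u)(1-ut)\left((1-u)^r-t(1-ut)^r\right)}=\frac{-1}{t(1-u)}+\frac{-1}{1-ut}+\frac{(1-u)^{r-1}-t^2(1-ut)^{r-1}}{t\left((1-u)^r-t(1-ut)^r\right)}$$
directly, by clearing denominators: writing $A=1-u$ and $B=1-ut$, the combined numerator on the right collapses to $t(B^{r+1}-A^{r+1})$ after the $A^rB$ and $t^2AB^r$ terms cancel, which matches the left-hand side. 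This step is routine but worth isolating, since it separates the two elementary geometric pieces from the genuinely new piece $f(u)$.

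Next I would extract the coefficient of $u^n$. The two elementary terms contribute $[u^n]\tfrac{-1}{t(1-u)}=-\tfrac1t$ and $[u^n]\tfrac{-1}{1-ut}=-t^n$, hence together $-\tfrac{1+t^{n+1}}{t}$; summing this constant against $\sum_{r\geqslant0}s^r=\tfrac1{1-s}$ produces the term $-\tfrac{1+t^{n+1}}{t(1-s)}$, while the remaining piece contributes $\tfrac1t\sum_{r\geqslant0}s^rf_n$. Multiplying the resulting identity through by $(1-s)^{n+1}$ then collapses $(1-s)^{n+1}/(1-s)$ to $(1-s)^n$ and yields precisely the two displayed summands of Theorem~\ref{thT1}, provided one knows that $f_n=\det({\bf M}_{n,r})$.

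The heart of the argument is therefore this determinantal evaluation. I would obtain it by clearing the denominator of $f(u)$, that is, by reading off the coefficient of $u^n$ in $\left((1-u)^r-t(1-ut)^r\right)f(u)=(1-u)^{r-1}-t^2(1-ut)^{r-1}$. Since the coefficient of $u^j$ in the left factor is $(-1)^j\binom{r}{j}(1-t)[j+1]_t$ and that of $u^n$ on the right is $(-1)^n\binom{r-1}{n}(1-t)[n+2]_t$, cancelling the common factor $1-t$ gives the recurrence $\sum_{j=0}^n(-1)^j\alpha_jf_{n-j}=\beta_n$, in which $\alpha_0=\binom{r}{0}[1]_t=1$. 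Viewing the equations for $n=0,1,\dots,N$ as a linear system in $(f_0,\dots,f_N)$, the coefficient matrix is a lower-triangular convolution matrix with unit diagonal, hence has determinant $1$; Cramer's rule then expresses $f_N$ as the determinant of the matrix obtained by replacing its last column with the inhomogeneous vector $(\beta_0,\dots,\beta_N)^{\mathsf T}$, which is ${\bf M}_{N,r}$.

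The step I expect to demand the most care is exactly this Cramer identification, because the off-diagonal entries of the system carry the alternating sign $(-1)^{i-j}$ coming from $(-1)^j\alpha_j$, and to match the stated matrix $(m_{ij})=(\alpha_{i-j})$ one must track how these signs interact with the sign already built into $\beta_i$. The clean device is a diagonal conjugation by $\mathrm{diag}(1,-1,1,\dots,(-1)^N)$, which preserves the determinant and strips the factor $(-1)^{i-j}$ from the first $N$ columns; I would then check carefully that the last column transforms consistently with the definition $m_{iN}=\beta_i$, as this is where a sign must be reconciled against the conventions fixed for $\alpha_j$ and $\beta_j$. Everything else—the partial-fraction split, the coefficient extraction, and the final multiplication by $(1-s)^{n+1}$—is bookkeeping that I would record in a few displayed lines.
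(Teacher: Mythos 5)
Your proposal follows the paper's argument essentially step for step: the same partial-fraction split (your verification with $A=1-u$, $B=1-ut$ is exactly right), the same coefficient extraction producing $-\frac{1+t^{n+1}}{t(1-s)}$, and the same convolution recurrence $\sum_{j=0}^n(-1)^j\alpha_jf_{n-j}=\beta_n$ obtained by clearing the denominator of $f(u)$ and cancelling the common factor $1-t$ (a cancellation the paper uses silently). The only substantive difference is that where the paper simply writes ``Clearly, $f_n=\det(\mathbf{M}_{n,r})$,'' you correctly isolate the Cramer identification as the step needing care --- but you defer it rather than carry it out, and carrying it out does \emph{not} close the way your third paragraph asserts.

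Concretely: Cramer's rule (the coefficient matrix is unitriangular since $\alpha_0=1$) gives $f_n=\det(C)$, where $C$ has entries $(-1)^{i-j}\alpha_{i-j}$ in columns $0,\dots,n-1$ and $\beta_i$ in the last column. Your proposed conjugation by $\mathrm{diag}(1,-1,\dots,(-1)^n)$ strips the signs from the $\alpha$-columns, but it sends the last column to $(-1)^{i+n}\beta_i=(-1)^n\binom{r-1}{i}[i+2]_t$, \emph{not} to $\beta_i$. So $f_n$ equals the determinant of the unsigned Toeplitz matrix only if the last column is replaced by $(-1)^n\binom{r-1}{i}[i+2]_t$; with the paper's literal definition ($m_{ij}=\alpha_{i-j}$, $m_{in}=\beta_i$) the identity already fails at $n=1$: the literal matrix gives $\beta_1-\alpha_1\beta_0=[3]_t-r(2+3t+2t^2)$, whereas $f_1=\beta_1+\alpha_1\beta_0=[3]_t+rt$, which is precisely the value the paper's own example records for $\det(\mathbf{M}_{1,r})$. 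In other words, the paper's definition of $\mathbf{M}_{n,r}$ carries a sign slip (the intended matrix is the genuine Cramer matrix $C$, i.e.\ $m_{ij}=(-1)^{i-j}\alpha_{i-j}$ for $j\leqslant n-1$), and its examples were computed with the signed version. Your proof becomes complete --- and in fact corrects the paper's statement --- once you finish the check you flagged: either keep the signs $(-1)^{i-j}$ on the $\alpha$-entries, or take the last column to be $(-1)^n\binom{r-1}{i}[i+2]_t$. Everything else in your write-up (summing against $\sum_{r\geqslant 0}s^r=1/(1-s)$ and multiplying through by $(1-s)^{n+1}$) matches the paper exactly.
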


\begin{example}
By Theorem~\ref{thT1}, when $n\leqslant 4$, we see that
\begin{align*}
\det({\bf M}_{0,r})&=[2]_t,~a_0(s,t)=0,~\det({\bf M}_{1,r})=[3]_t+tr,~a_1(s,t)=1,\\
\det({\bf M}_{2,r})&=[4]_t+\frac{3}{2}t(t+1)r+\frac{1}{2}t(t+1)r^2,~a_2(s,t)=1+t,\\
\det({\bf M}_{3,r})&=[5]_t+\frac{1}{6}t(11t^2+14t+11)r+t(t+1)^2r^2+\frac{1}{6}t(t^2+4t+1)r^3,\\
a_3(s,t)&=s^2t+2st+t^2+t+1,\\
\det({\bf M}_{4,r})&=[6]_t+\frac{5}{12}t(5t^3+7t^2+7t+5)r+\frac{5}{24}t(7t^3+17t^2+17t+7)r^2\\
&+\frac{5}{12}t(t+1)(t^2+4t+1)r^3+\frac{1}{24}t(t^3+11t^2+11t+1)r^4,\\
a_4(s,t)&=5s^2t(t+1)+5st(t+1)+t^3+t^2+t+1,\\
\end{align*}
\end{example}
\section*{Acknowledgements.}
We gratefully acknowledge Bruce Sagan and Qing-Hu Hou for their valuable suggestions and discussions.
The first author was supported by the National Natural Science Foundation of China (Grant number 12071063)
and Taishan Scholars Foundation of Shandong Province (No. tsqn202211146).

\end{document}